\newtheorem{theorem}{Theorem}
\theoremstyle{plain}
\newtheorem{definition}{Definition}
\newtheorem{remark}{Remark}
\numberwithin{equation}{section}
\begin{document}
\title[$P_{k}$ SETS ]{Extendibility of Some $P_{k}$ Sets}
\author{Bilge PEKER}
\address{Mathematics Education Programme, Ahmet Kele\c{s}o\u{g}lu Education
Faculty, Necmettin Erbakan University, Konya, Turkey.}
\email{bilge.peker@yahoo.com}
\author{Selin (INAG) CENBERCI}
\address{Mathematics Education Programme, Ahmet Kele\c{s}o\u{g}lu Education
Faculty, Necmettin Erbakan University, Konya, Turkey.}
\email{scenberci@konya.edu.tr}
\date{April 4, 2017}
\subjclass[2000]{Primary 11D09; Secondary 11D25}
\keywords{Diophantine m-tuples, Pell Equation, $P_{k}$ sets}

\begin{abstract}
A set of \textit{m} distinct positive integers $\left\{
a_{1},...a_{m}\right\} $ is called a Diophantine m-tuple if $a_{i}a_{j}+n$
is a square for each $1\leq i<j\leq m$ . The aim of this study is to show
that some $P_{k}$ sets can not be extendible to a Diophantine quadruple when 
$k=2$ and $k=-3$ and also to give some properties about $P_{k}$ sets.
\end{abstract}

\maketitle

INTRODUCTION

\bigskip

Let $n$ be an integer and $A$ be a set $\{a_{1},a_{2},a_{3},...,a_{m}\}$
with $m$ different positive integers. This set has the property $D\left(
n\right) $ if $a_{i}a_{j}+n$ is a perfect square for all $1\leq i<j\leq m.$
Such a set is called as a Diophantine m-tuple (with the property $D\left(
n\right) $) or a $P_{n}$ set of size $m$.

The problem extending of $P_{k}$ sets is an old one dating from the times of
Diophantus $\left[ 3\right] .$ So, this problem has been studied
extensively. Diophantus studied finding four numbers such that the product
of any two of them, increased by 1, is a perfect square. In $\left[ 2,4,9%
\right] $, the more general form of this problem was considered.The most
famous result in this area is the one found by Baker and Davenport $\left[ 1%
\right] $ who proved for the four numbers $1,3,8,120$\ the property that the
product of any two, increased by $1$, is a perfect square. Dujella $\left[ 4%
\right] $ has a vast amount of literature about this interesting problem.

Filipin $\left[ 7\right] $ studied on the set $P_{-1}=\{1,10,c\}$ and proved
that there do not exist different positive integers $c,d>1$ such that the
product of any two distinct elements of the set $\{1,10,c,d\}$ diminished by 
$1$ is a perfect square.

In 1985, Brown $\left[ 2\right] $, Gupta and Singh $\left[ 8\right] $,
Mohanty and Ramasamy $\left[ 11\right] $ proved independently that if \ $%
n\equiv 2\left( \func{mod}4\right) $ then there does not exist a $P_{n}$ set
of size 4. Peker et al. $\left[ 16\right] $ showed that for an integer $k$
such that $\left\vert k\right\vert \geq 2,$ the $D\left( -2k+1\right) -$%
triple $\left\{ 1,k^{2},k^{2}+2k-1\right\} $ can not be extended to a $%
D\left( -2k+1\right) -$quadruple.

Mohanty and Ramasay $\left[ 10\right] $ showed non-extendibility of the set $%
P_{-1}=\{1,5,10\}$. Kedlaya $\left[ 9\right] $ gave a list of
non-extendibity of $P_{-1}$ $\ $triples $\{1,2,45\}$, $\{1,2,4901\}$, $%
\{1,5,65\}$, $\{1,5,20737\}$, $\{1,10,17\}$ and $\{1,26,37\}$. Both
Thamotherampillai $\left[ 17\right] $ and Mootha \& Berzsenyi $\left[ 13%
\right] $ proved the non-extendibility of the set $P_{2}=\{1,2,7\}$ by u$%
\sin $g different methods. Ozer $\left[ 15\right] $ gave some $P_{11}$ and $%
P_{-11\text{ }}$ triples.

Diophantine triples are classified as regular or irregular according to
whether they provide the condition below.

\begin{definition}
$\left[ 6\right] $ A $P_{k}$-triple $\left\{ a,b,c\right\} $ is called
regular if it satisfies the condition $\left( c-b-a\right) ^{2}=4\left(
ab+k\right) $. This equation is symmetric under permutations of $a,b,c$.
\end{definition}

\begin{definition}
\bigskip $\left[ 12\right] $ If $p$ is an odd prime and $\left( a,p\right)
=1 $ then $\left( \frac{a}{p}\right) $ is defined as Legendre Symbol and $%
\left( \frac{a}{p}\right) $=$\left\{ 
\begin{array}{c}
\overset{}{+1,}\text{ \ \ \ } \\ 
-1,\text{ \ \ \ \ }%
\end{array}%
\right. 
\begin{array}{c}
if \\ 
if%
\end{array}%
\begin{array}{c}
aRp \\ 
aNp%
\end{array}%
$

\begin{theorem}
$\left[ 12\right] $ (Euleur Criterion) If $p$ is a prime and $p$ does not
divide a positive integer $a$, then $\left( \frac{a}{p}\right) \equiv a^{%
\frac{p-1}{2}}\left( \func{mod}p\right) $ .
\end{theorem}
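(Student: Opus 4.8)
The plan is to deduce Euler's Criterion from Fermat's Little Theorem together with a counting argument in the field $\mathbb{Z}/p\mathbb{Z}$. Throughout I take $p$ to be an \emph{odd} prime, since by the preceding definition the Legendre symbol $\left(\frac{a}{p}\right)$ is only defined in that case, which also guarantees that $\frac{p-1}{2}$ is an integer.

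First I would apply Fermat's Little Theorem: because $p\nmid a$, we have $a^{p-1}\equiv 1\pmod p$. Rewriting this as
\[
\left(a^{(p-1)/2}-1\right)\left(a^{(p-1)/2}+1\right)\equiv 0\pmod p
\]
and using that $p$ is prime (so $\mathbb{Z}/p\mathbb{Z}$ is an integral domain), I obtain the dichotomy $a^{(p-1)/2}\equiv 1\pmod p$ or $a^{(p-1)/2}\equiv -1\pmod p$. The whole task then reduces to pinning down which sign occurs in terms of $\left(\frac{a}{p}\right)$.

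Next I would dispatch the quadratic residue case. If $\left(\frac{a}{p}\right)=1$, then by definition $a\equiv x^{2}\pmod p$ for some $x$ with $p\nmid x$, so $a^{(p-1)/2}\equiv x^{p-1}\equiv 1\pmod p$ by Fermat again. In particular every quadratic residue is a solution of the congruence $X^{(p-1)/2}\equiv 1\pmod p$.

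The main obstacle is the non-residue case, which I would close by counting. Viewed over the field $\mathbb{Z}/p\mathbb{Z}$, the congruence $X^{(p-1)/2}\equiv 1\pmod p$ is a polynomial equation of degree $\frac{p-1}{2}$ and hence has at most $\frac{p-1}{2}$ roots. On the other hand, the squares $1^{2},2^{2},\ldots,\left(\frac{p-1}{2}\right)^{2}$ are pairwise incongruent modulo $p$, so they furnish exactly $\frac{p-1}{2}$ distinct quadratic residues, each a root by the previous step. Therefore the quadratic residues are \emph{precisely} the roots of $X^{(p-1)/2}\equiv 1\pmod p$. Consequently, if $\left(\frac{a}{p}\right)=-1$ then $a$ is not a root, and the dichotomy of the first step forces $a^{(p-1)/2}\equiv -1\pmod p$. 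Combining both cases gives $\left(\frac{a}{p}\right)\equiv a^{(p-1)/2}\pmod p$, as claimed. I expect the only delicate points to be the appeal to the bound on the number of roots of a polynomial over $\mathbb{Z}/p\mathbb{Z}$ and the distinctness of the listed squares; an alternative avoiding the counting step is to fix a primitive root $g$ modulo $p$, write $a\equiv g^{t}$, and observe that $a^{(p-1)/2}\equiv g^{t(p-1)/2}\equiv(-1)^{t}$, which equals $+1$ exactly when $t$ is even, i.e.\ exactly when $a$ is a quadratic residue.
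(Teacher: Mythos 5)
Your proof is correct and complete. Note, however, that the paper does not prove this statement at all: Euler's Criterion is quoted as a known theorem from reference $\left[12\right]$ (Mollin's textbook) and is used later only to evaluate $\left(\frac{2}{3}\right)$ and $\left(\frac{2}{5}\right)$ in the remarks about multiples of $3$ and $5$. So there is no proof in the paper to compare against; what you have supplied is the standard derivation. Your argument is sound at every step: Fermat's Little Theorem gives the factorization $\left(a^{(p-1)/2}-1\right)\left(a^{(p-1)/2}+1\right)\equiv 0\ (\func{mod}\ p)$ and hence the dichotomy $a^{(p-1)/2}\equiv \pm 1$; the residue case follows by substituting $a\equiv x^{2}$; and the non-residue case is closed by the counting argument, which correctly combines Lagrange's bound of $\frac{p-1}{2}$ roots for $X^{(p-1)/2}-1$ over the field $\mathbb{Z}/p\mathbb{Z}$ with the fact that $1^{2},\dots,\left(\frac{p-1}{2}\right)^{2}$ are pairwise incongruent (since $i^{2}\equiv j^{2}$ forces $p\mid (i-j)(i+j)$ and $2\leq i+j\leq p-1$). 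You were also right to restrict to odd $p$, since the paper's preceding definition only defines the Legendre symbol for odd primes, and $\frac{p-1}{2}$ must be an integer for the statement to make sense. The primitive-root alternative you sketch is equally valid and somewhat slicker, at the cost of assuming the existence of a primitive root modulo $p$, which is a deeper fact than Lagrange's root bound.
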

\end{definition}

In this paper, we give some$\ $non-extendable $P_{2}$ and $P_{-3}$ sets by
using solutions of simultaneous Pell equations.

\begin{theorem}
\textit{\ \ }The set $P_{2}=\{7,14,41\}$ is non-extendable.
\end{theorem}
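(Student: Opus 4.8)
The plan is to proceed by contradiction. Suppose a positive integer $d\notin\{7,14,41\}$ extends the triple; then $7d+2=x^{2}$, $14d+2=y^{2}$, and $41d+2=z^{2}$ for some positive integers $x,y,z$. Eliminating $d$ from the first two equations produces the Pell-type relation $y^{2}-2x^{2}=(14d+2)-2(7d+2)=-2$, which is the simultaneous-Pell viewpoint suggested above. Rather than solve this equation explicitly, I would use it only to extract the congruence information that pins down $d$.

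The key step is to determine $d\pmod{8}$. From $14d+2=y^{2}$ we have $y^{2}=2(7d+1)$, so $y$ is even; writing $y=2w$ gives $2w^{2}=7d+1$. Hence $7d+1$ is even, which forces $d$ to be odd. Then $7d+2$ is odd, so $x$ is odd and $x^{2}\equiv 1\pmod{8}$. Substituting into $7d+2=x^{2}$ yields $7d\equiv 7\pmod{8}$, and since $\gcd(7,8)=1$ this gives $d\equiv 1\pmod{8}$.

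The final step applies this to the third equation. Because $41\equiv 1\pmod{8}$, we obtain $41d+2\equiv 1\cdot 1+2\equiv 3\pmod{8}$. But the squares modulo $8$ are exactly $0,1,4$, so $z^{2}\equiv 3\pmod{8}$ is impossible. This contradiction shows that no extending $d$ exists, so $P_{2}=\{7,14,41\}$ is non-extendable. If one prefers the Legendre-symbol and Euler-criterion language recalled above, the same conclusion can be packaged as a non-residue condition modulo a suitable odd prime, but working modulo $8$ already closes the argument.

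I expect no serious obstacle here: once the correct modulus is identified everything is immediate, and the whole proof rests on the observation that the first two square conditions rigidly force $d\equiv 1\pmod{8}$. The only delicate point is the parity deduction that $d$ is odd. Had I instead followed the literal route of fully solving the two simultaneous Pell equations, the real obstacle would be showing that their two binary recurrence sequences of solutions never coincide at a genuinely new value of $d$ — precisely the step for which congruences (and Euler's criterion) are the natural shortcut.
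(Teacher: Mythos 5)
Your proof is correct, and it takes a genuinely different --- and in fact more complete --- route than the paper's. The paper eliminates $m$ from the first two conditions, reduces to the Pell equation $x^{2}-8y_{2}^{2}=1$, writes down the recurrence $x_{n}=6x_{n-1}-x_{n-2}$ for its solutions, and then asserts that ``one can easily check'' that none of the resulting infinitely many values of $m$ makes $41m+2$ a square; no congruence or other device is supplied to justify that claim for all $n$, so the published argument is really only a finite verification dressed up as a proof. You instead bypass the Pell machinery entirely: the second condition forces $d$ odd, the first condition then forces $d\equiv 1\pmod 8$ because odd squares are $1\pmod 8$, and since $41\equiv 1\pmod 8$ the third condition would require $z^{2}\equiv 3\pmod 8$, which is impossible. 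Every step checks out (including the inversion of $7$ modulo $8$), and the argument rules out \emph{every} positive integer $d$, not just those produced by the recurrence. What your approach buys is a complete, two-line certificate of impossibility; what the paper's approach would buy, if finished, is a template that generalizes to triples where no single modulus kills all three conditions and one genuinely must intersect two recurrence sequences. For this particular triple your modulus-$8$ argument is the right tool, and it also quietly supplies the missing final step of the paper's own proof (one could show $x_{n}^{2}\equiv 1$ or $9\pmod{16}$ along the recurrence and reach the same contradiction, but your version is cleaner).
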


\begin{proof}
\textit{Assume that the Diophantine triple }$P_{2}=\{7,14,41\}$\textit{\
with the property }$D\left( 2\right) $ \textit{can be extended with }$m$ to
a Diophantine quadruple.\textit{\ Then there exist integers }$x,y,z$\textit{%
\ such that \ }$\ \ \ \ \ \ \ \ \ \ $%
\begin{equation*}
\ \ 7m+2=x^{2}\ \ \ \ \ \ \ \ \ \left( 1\right) \mathit{\ \ \ }
\end{equation*}%
\textit{\ \ \ \ \ \ \ \ \ \ \ \ \ \ \ \ \ }$\ \ \ \ \ \ \ \ \ \ \ \ \ \ \ \
\ \ \ \ \ \ \ \ \ \ \ \ $%
\begin{equation*}
14m+2=y^{2}\ \ \ \ \ \ \ \ \ \left( 2\right) \mathit{\ \ }
\end{equation*}%
\textit{\ \ \ \ \ \ \ \ \ \ \ \ \ \ \ \ \ \ \ \ \ \ }$\ \ \ \ \ \ \ \ \ \ \
\ \ \ \ \ \ \ \ \ \ \ \ \ $%
\begin{equation*}
41m+2=z^{2}\ \ \ \ \ \ \ \ \ \left( 3\right) .\mathit{\ \ }
\end{equation*}%
\textit{\ \ \ \ \ By eliminating }$m$\textit{\ from the equations}$\ \left(
1\right) $\textit{\ and }$\left( 2\right) $,\textit{\ we get\ \ \ \ \ \ \ \
\ \ \ \ \ \ \ \ \ \ \ \ \ \ \ }$\mathit{\ \ }$%
\begin{equation*}
y^{2}-2x^{2}=-2\ \ 
\end{equation*}%
\textit{and this gives \ \ \ \ \ \ \ \ \ \ \ \ \ \ \ \ \ \ \ \ \ \ \ \ \ }$%
\mathit{\ }$%
\begin{equation*}
2(x^{2}-1)=y^{2}\text{ \ \ \ \ \ \ }\left( 4\right) .
\end{equation*}

\textit{In the last equation, since the left hand side is even, the right
hand side must be even, too. So, we can write \ }$y=2y_{1}$ $\left( y_{1}\in 
%TCIMACRO{\U{2124} }%
%BeginExpansion
\mathbb{Z}
%EndExpansion
\right) $. \textit{Then the last equation becomes\ \ \ \ \ \ \ \ \ \ \ \ \ \
\ \ \ \ \ \ \ \ \ \ \ }%
\begin{equation*}
\mathit{\ }x^{2}-1=2y_{1}^{2}.
\end{equation*}

\textit{From this, we can conclude that }$x$\textit{\ must be odd, that is, }%
$x=2x_{1}+1\ (x_{1}\in 
%TCIMACRO{\U{2124} }%
%BeginExpansion
\mathbb{Z}
%EndExpansion
)$. \textit{Then, the last equation will be as follows}\ \ \ \ \ \ \ \ \ \ \
\ \ \ \ \ \ \ \ \ \ \ \ \ 
\begin{equation*}
2x_{1}\left( x_{1}+1\right) =y_{1}^{2}.
\end{equation*}%
Since \textit{the left hand side of this equation has multiple of }$2$,%
\textit{\ the right hand side must be even. So, we can write }$y_{1}=2y_{2}\
\left( y_{2}\in 
%TCIMACRO{\U{2124} }%
%BeginExpansion
\mathbb{Z}
%EndExpansion
\right) .$ Now, we have\textit{\ }$y=4y_{2}.\ $\textit{If we write }$%
y=4y_{2} $\textit{\ in the equation }$(4),$\textit{\ we find}\ \ \ \ \ \ \ \
\ \ \ \ \ \ \ \ \ \ \ \ \ 
\begin{equation*}
x^{2}-8y_{2}^{2}=1.
\end{equation*}%
\textit{The fundamental solution of this Pell equation\ is }$\left(
x,y_{2}\right) =(3,1)\in 
%TCIMACRO{\U{2124} }%
%BeginExpansion
\mathbb{Z}
%EndExpansion
^{2}.$ \textit{So, all solutions of this Pell equation can be given in the
form }$x_{n}+\sqrt{8}(y_{2})_{n}=(3+\sqrt{8})^{n}$ \textit{by the usual
methods. We get the general recurrence relation for the solutions of }$x$%
\textit{\ as} $x_{n}=6x_{n-1}-x_{n-2}.$\textit{\ Using this general
recurrence relation, we can get some values of }$m$\textit{\ from the
equation }$\left( 1\right) $\textit{. For these values of }$m$,\textit{\ one
can easily check that none of these values give any square of an integer for
the equation }$\left( 3\right) $\textit{. So, we can not get an integer
solution for }$z.$ \textit{This means that the set }$P_{2}=\{7,14,41\}$%
\textit{\ is non-extendable.}
\end{proof}

\begin{remark}
\bigskip A $P_{2}$-triple $\left\{ 7,14,41\right\} $ is called regular.
\end{remark}

Now, in the following theorem we show that another $P_{2}$ triple, namely $%
\{1,7,14\}$ can not be extended to a Diophantine quadruple by using above
method.

\begin{theorem}
The set $P_{2}=\{1,7,14\}$ is non-extendable.
\end{theorem}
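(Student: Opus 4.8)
The plan is to mimic the method of the preceding theorem. Suppose, for contradiction, that $\{1,7,14\}$ extends to a Diophantine quadruple $\{1,7,14,m\}$ with the property $D(2)$. Since $1\cdot 7+2$, $1\cdot 14+2$ and $7\cdot 14+2$ are already the squares $9,16,100$, the only new requirements are that there exist integers $w,x,y$ with
\begin{equation*}
m+2=w^{2},\qquad 7m+2=x^{2},\qquad 14m+2=y^{2}.
\end{equation*}
I would label these as equations $(1)$, $(2)$, $(3)$ and treat them as a system to be shown inconsistent.

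First I would eliminate $m$ from $(2)$ and $(3)$, exactly as in the previous proof. Doubling $(2)$ and subtracting $(3)$ gives $2=2x^{2}-y^{2}$, that is $y^{2}=2(x^{2}-1)$, which is precisely equation $(4)$ of the previous proof. The same reduction therefore applies verbatim: $y$ is even, $x$ is odd, and after the substitution $y=4y_{2}$ one is led to the Pell equation $x^{2}-8y_{2}^{2}=1$ with fundamental solution $(3,1)$ and recurrence $x_{n}=6x_{n-1}-x_{n-2}$. The single fact I really need to carry forward is that $x$ is necessarily odd, so $x^{2}\equiv 1\pmod 8$.

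The decisive step is the remaining condition $(1)$. From $m+2=w^{2}$ I get $m=w^{2}-2$, and substituting into $(2)$ yields
\begin{equation*}
x^{2}=7w^{2}-12.
\end{equation*}
Here I would read this modulo $8$. Since $x$ is odd we have $x^{2}\equiv 1\pmod 8$, hence $7w^{2}\equiv 13\equiv 5\pmod 8$; as $7\equiv -1\pmod 8$ this forces $w^{2}\equiv 3\pmod 8$. But the squares modulo $8$ are only $0,1,4$, so $w^{2}\equiv 3\pmod 8$ is impossible. This contradiction shows that the system $(1)$--$(3)$ has no integer solution, and hence $P_{2}=\{1,7,14\}$ is non-extendable.

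I expect the only genuine subtlety to be confirming that $x$ is truly forced to be odd, which is exactly where the reduction of $(2)$ and $(3)$ is used; once that is secured, the congruence modulo $8$ closes the argument immediately. In particular, this congruence obstruction is what I would use in place of the step-by-step numerical checking of the Pell solutions employed in the preceding theorem: there is no need to enumerate the values $x_{n}$ and test equation $(1)$ one at a time, since no admissible $x$ can satisfy $x^{2}=7w^{2}-12$ at all.
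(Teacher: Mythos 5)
Your proof is correct, and although it opens exactly as the paper does, its decisive step takes a genuinely different and in fact stronger route. Both arguments eliminate $m$ from $7m+2=x^{2}$ and $14m+2=y^{2}$ to reach $y^{2}=2(x^{2}-1)$, whence $y$ is even and $x$ is odd. The paper then pursues the Pell equation $x^{2}-8y_{2}^{2}=1$ (in its notation, $y^{2}-8z_{2}^{2}=1$), writes down the recurrence for its solutions, and asserts that ``one can easily check'' that none of the resulting values of $m$ makes $m+2$ a square; since the recurrence generates infinitely many candidates, that checking is not a finite verification, and the paper's proof is incomplete as written. You instead retain only the fact that $x$ is odd, so $x^{2}\equiv 1\pmod{8}$, and combine the remaining two conditions into $x^{2}=7w^{2}-12$, which modulo $8$ forces $w^{2}\equiv 3\pmod{8}$, impossible because the squares modulo $8$ are $0,1,4$. (Equivalently: $x^{2}\equiv 1\pmod{8}$ gives $m\equiv 1\pmod{8}$, so $m+2\equiv 3\pmod{8}$ cannot be a square.) Your arithmetic is right: $7w^{2}\equiv 13\equiv 5\pmod{8}$ and $7\equiv -1\pmod{8}$ indeed yield $w^{2}\equiv 3\pmod{8}$. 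This congruence obstruction disposes of all cases at once, dispenses with the Pell enumeration entirely, and supplies the rigor that the paper's final step lacks.
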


\begin{proof}
\textit{Assume that there is a positive integer }$m$\textit{\ and the set }$%
P_{2}=\{1,7,14\}$\textit{\ can be extended with }$m.$\textit{\ Then, let us
find an integer }$m$\textit{\ such that, \ }$\ \ \ \ \ \ \ \ \ \ $%
\begin{equation*}
\ \ m+2=x^{2}\ \ \ \ \ \ \ \ \ \left( 5\right) \mathit{\ \ \ }
\end{equation*}%
\textit{\ \ \ \ \ \ \ \ \ \ \ \ \ \ \ \ \ }$\ \ \ \ \ \ \ \ \ \ \ \ \ \ \ \
\ \ \ \ \ \ \ \ \ \ \ \ $%
\begin{equation*}
7m+2=y^{2}\ \ \ \ \ \ \ \ \ \left( 6\right) \mathit{\ \ }
\end{equation*}%
\textit{\ \ \ \ \ \ \ \ \ \ \ \ \ \ \ \ \ \ \ \ \ \ }$\ \ \ \ \ \ \ \ \ \ \
\ \ \ \ \ \ \ \ \ \ \ \ \ $%
\begin{equation*}
14m+2=z^{2}\ \ \ \ \ \ \ \ \ \left( 7\right) .\mathit{\ \ }
\end{equation*}%
where $x,y,z$ are some integers satisfying the above equations.\textit{\ \ \
\ \ \ }

\textit{By eliminating }$m$\textit{\ from\ the equations }$\left( 6\right) $%
\textit{\ and }$\left( 7\right) $ \textit{we get}

\begin{equation*}
z^{2}-2y^{2}=-2\ \ \ \ 
\end{equation*}%
\textit{and this yields \ \ \ \ \ \ \ \ \ \ \ \ \ \ \ \ \ \ \ \ \ \ \ \ \ }$%
\mathit{\ }$%
\begin{equation*}
2(y^{2}-1)=z^{2}\text{ \ \ \ \ \ \ }\left( 8\right) .
\end{equation*}

\textit{In the last equation, since the left side is even, the right side
must be even, too. So, we can write }$\mathit{z}=2z_{1}$ $\left( z_{1}\in 
%TCIMACRO{\U{2124} }%
%BeginExpansion
\mathbb{Z}
%EndExpansion
\right) ,$\textit{\ then the equation }$\left( 8\right) $ \textit{becomes \
\ \ \ \ \ \ \ \ \ \ \ \ \ \ \ \ \ \ \ \ \ \ \ \ }%
\begin{equation*}
\mathit{\ }y^{2}-1=2z_{1}^{2}.
\end{equation*}

\textit{From this, we can conclude that }$y$\textit{\ must be odd, that is, }%
$y=2y_{1}+1\ (y_{1}\in 
%TCIMACRO{\U{2124} }%
%BeginExpansion
\mathbb{Z}
%EndExpansion
)$. \textit{Then, we find}\ \ \ \ \ \ \ \ \ \ \ \ \ \ \ \ \ \ \ \ \ \ \ \ 
\begin{equation*}
2y_{1}\left( y_{1}+1\right) =z_{1}^{2}.
\end{equation*}%
Since the \textit{left hand side of this equation has multiple of }$2$,%
\textit{\ the right hand side must be even. So, we can write }$z_{1}=2z_{2}\
\left( z_{2}\in 
%TCIMACRO{\U{2124} }%
%BeginExpansion
\mathbb{Z}
%EndExpansion
\right) .$

\textit{Consequently, }$z=4z_{2}.\ $\textit{If we write }$z=4z_{2}$\textit{\
in the equation }$(8),$\textit{\ we find}\ \ \ \ \ \ \ \ \ \ \ \ \ \ \ \ \ \
\ \ \ 
\begin{equation*}
y^{2}-8z_{2}^{2}=1.
\end{equation*}%
\textit{The fundamental solution of this Pell equation\ is }$\left(
y,z_{2}\right) =(3,1)\in 
%TCIMACRO{\U{2124} }%
%BeginExpansion
\mathbb{Z}
%EndExpansion
^{2}.$ \textit{So, all solutions of this Pell equation can be given in the
form }$y_{n}+\sqrt{8}(z_{2})_{n}=(3+\sqrt{8})^{n}$ \textit{by the usual
methods. We get the general recurrence relation for the solutions of }$y$%
\textit{\ as }$y_{n}=6y_{n-1}-y_{n-2}.$\textit{\ Using this general
recurrence relation, we can get some values of }$m$\textit{\ from the
equation }$\left( 6\right) $\textit{. For these values of }$m$,\textit{\ one
can easily check that none of these values give any square of an integer for
the equation }$\left( 5\right) $\textit{. So, we can not get an integer
solution for }$x.$ \textit{This means that the set }$P_{2}=\{1,7,14\}$%
\textit{\ is non-extendable.}\ 
\end{proof}

\begin{remark}
\bigskip \bigskip A $P_{2}$-triple $\left\{ 1,7,14\right\} $ is called
regular.
\end{remark}

In addition, we can give another $P_{2}$ triple which can not be extended to
a quadruple.

\begin{theorem}
\ \ The set $P_{2}=\{41,239,478\}$ is non-extendable.\ \ \ \ \ 
\end{theorem}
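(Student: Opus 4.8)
The plan is to mirror the two preceding proofs, exploiting the arithmetic coincidence that the third element is exactly twice the second, namely $478=2\cdot 239$. Assume the triple extends with a positive integer $m$, so that there are integers $x,y,z$ with $41m+2=x^{2}$, $239m+2=y^{2}$, and $478m+2=z^{2}$. First I would eliminate $m$ from the last two equations. Doubling the middle equation and subtracting the last gives $2y^{2}-z^{2}=2$, that is,
\begin{equation*}
z^{2}=2\left( y^{2}-1\right) .
\end{equation*}
This is formally identical to equation $(8)$ in the proof of the previous theorem, so the same parity reduction applies verbatim.

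Following that reduction, evenness of the left-hand side forces $z=2z_{1}$, then oddness of $y$ forces $y=2y_{1}+1$, and a further parity step forces $z_{1}=2z_{2}$, so that $z=4z_{2}$. Substituting back, one arrives at the Pell equation
\begin{equation*}
y^{2}-8z_{2}^{2}=1,
\end{equation*}
whose fundamental solution is $\left( y,z_{2}\right) =(3,1)$ and whose solutions satisfy the recurrence $y_{n}=6y_{n-1}-y_{n-2}$. From $239m+2=y^{2}$ I would then read off the candidate values $m=(y_{n}^{2}-2)/239$, retaining only those $y_{n}$ with $y_{n}^{2}\equiv 2\left( \func{mod}239\right) $ so that $m$ is a positive integer.

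The final step is to feed these admissible values of $m$ into the remaining equation $41m+2=x^{2}$ and verify that none of them produces a perfect square, exactly as in the two earlier proofs. Concluding that no such $x$ exists then shows that $\{41,239,478\}$ admits no extension, and one should expect that, as with $\{7,14,41\}$ and $\{1,7,14\}$, the resulting triple is regular.

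I expect the genuine difficulty to lie precisely in this last step. The Pell recurrence generates infinitely many candidate values of $m$, so a finite numerical check over small $n$ is only suggestive; a fully rigorous argument would need either a congruence obstruction modulo a suitable prime that rules out $41m+2$ being a square for every admissible $y_{n}$, or an appeal to the theory of the simultaneous system $41m+2=x^{2}$, $239m+2=y^{2}$ having only finitely many, and in this case no compatible, solutions. Supplying such an obstruction, rather than merely tabulating cases, is the crux of the matter.
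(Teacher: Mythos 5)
Your proposal follows essentially the same route as the paper: eliminate $m$ from the equations for $239$ and $478$ to obtain $2(y^{2}-1)=z^{2}$, observe that this coincides with equations $(4)$ and $(8)$, and repeat the reduction to the Pell equation $y^{2}-8z_{2}^{2}=1$ with recurrence $y_{n}=6y_{n-1}-y_{n-2}$ before testing the resulting values of $m$ against $41m+2=x^{2}$. Your closing observation that the final verification ranges over infinitely many candidate $m$ and therefore requires a congruence obstruction or a finiteness argument rather than a finite tabulation is well taken, but it is a criticism that applies equally to the paper's own proof (which simply asserts that the check can be done) rather than a divergence from its method.
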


\begin{proof}
\textit{Assume that there is a positive integer }$m$\textit{\ and the set }$%
P_{2}=\{41,239,478\}$\textit{\ can be extended with }$m.$\textit{\ Then, let
us find an integer }$m$\ such that,\textit{\ \ }$\ \ \ \ \ \ \ \ \ \ $%
\begin{equation*}
\ \ 41m+2=x^{2}\ \ \ \ \ \ \ \ \ \left( 9\right) \mathit{\ \ \ }
\end{equation*}%
\textit{\ \ \ \ \ \ \ \ \ \ \ \ \ \ \ \ \ }$\ \ \ \ \ \ \ \ \ \ \ \ \ \ \ \
\ \ \ \ \ \ \ \ \ \ \ \ $%
\begin{equation*}
239m+2=y^{2}\ \ \ \ \ \ \ \ \ \left( 10\right) \mathit{\ \ }
\end{equation*}%
\textit{\ \ \ \ \ \ \ \ \ \ \ \ \ \ \ \ \ \ \ \ \ \ }$\ \ \ \ \ \ \ \ \ \ \
\ \ \ \ \ \ \ \ \ \ \ \ \ $%
\begin{equation*}
478m+2=z^{2}\ \ \ \ \ \ \ \ \left( 11\right) .\mathit{\ \ }
\end{equation*}%
where $x,y,z$ are some integers satisfying the above equations.\textit{\ \ \
\ \ \ }

\textit{By eliminating }$m$\textit{\ from the equations }$\left( 10\right) $%
\textit{\ and }$\left( 11\right) $,\textit{\ we get}

\begin{equation*}
2y^{2}-z^{2}=2\ \ \ 
\end{equation*}%
\textit{and \ \ \ \ \ \ \ \ \ \ \ \ \ \ \ \ \ \ \ \ \ \ \ \ \ }$\mathit{\ }$%
\begin{equation*}
2(y^{2}-1)=z^{2}.
\end{equation*}

\textit{Since this equation is the same, equation }$\left( 4\right) $\textit{%
\ and }$\left( 8\right) ,$\textit{\ in the next steps, we will apply similar
methods done in Theorem 1 and 2. Hence, this completes the proof.}
\end{proof}

\begin{remark}
\textit{\ }\bigskip A $P_{2}$-triple $\left\{ 41,239,478\right\} $ is called
regular.\textit{\ \ }\ \ \ \ \ \ 
\end{remark}

\begin{remark}
\textit{\ }$\ $There is no set $P_{2}$ including any positive multiple of $3$%
.
\end{remark}

\bigskip $\ $\textit{Let's assume that }$3k$\textit{\ }$(k\in 
%TCIMACRO{\U{2124} }%
%BeginExpansion
\mathbb{Z}
%EndExpansion
^{+})$\textit{\ is an element of the set }$P_{2}.$\textit{\ For any }$%
t\epsilon P_{2}$,\textit{\ the equation\ \ \ \ \ \ \ \ \ \ \ \ \ \ \ \ \ \ \
\ }%
\begin{equation*}
3kt+2=x^{2}
\end{equation*}%
\textit{must be satisfied. In modulo3},\textit{\ the following equation is
deduced,\ \ \ \ \ \ \ \ \ \ \ \ \ \ }

\textit{\ \ \ \ \ \ }%
\begin{equation*}
\mathit{\ \ \ \ }x^{2}\equiv 2\ (\func{mod}3).
\end{equation*}

\textit{Since }$3$\textit{\ is an odd prime and }$\left( 2,3\right) =1,$%
\textit{\ using Legendre Symbol and Euleur Criterion, we get }$\left( \dfrac{%
2}{3}\right) =-1$\textit{\ which means that the last equation \ is
unsolvable. Therefore, }$3k$\textit{\ }$(k\in 
%TCIMACRO{\U{2124} }%
%BeginExpansion
\mathbb{Z}
%EndExpansion
^{+})$\textit{\ can not be an element of the\ set }$P_{2}.$

\begin{theorem}
The $P_{-3}$ set $\left\{ 3,4,13\right\} $ can not be extendible.
\end{theorem}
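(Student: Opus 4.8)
The plan is to follow the elimination-and-Pell strategy of Theorems 1--3, but to finish with a congruence obstruction rather than an (in principle infinite) numerical check. First I would record that $\{3,4,13\}$ really is a $P_{-3}$-triple, since $3\cdot 4-3=9$, $3\cdot 13-3=36$ and $4\cdot 13-3=49$ are all perfect squares. Assuming the triple can be extended by some integer $m$, there are integers $x,y,z$ with
\begin{equation*}
3m-3=x^{2},\qquad 4m-3=y^{2},\qquad 13m-3=z^{2}.
\end{equation*}

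The first key step exploits the factor $3$ in the leading equation. Since $3m-3=3(m-1)$ we get $3\mid x^{2}$, hence $3\mid x$; writing $x=3x_{1}$ gives $m=3x_{1}^{2}+1$. Substituting this into the second equation turns it into the Pell equation $y^{2}-12x_{1}^{2}=1$, whose fundamental solution is $(y,x_{1})=(7,2)$ and whose solutions obey $(x_{1})_{n}=14(x_{1})_{n-1}-(x_{1})_{n-2}$ with $(x_{1})_{0}=0$ and $(x_{1})_{1}=2$. The structural fact I would extract here is that \emph{every} admissible $x_{1}$ is even, because the recurrence preserves parity from two even initial terms.

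The second key step substitutes $m=3x_{1}^{2}+1$ into the third equation to obtain $z^{2}=39x_{1}^{2}+10$, equivalently the companion Pell relation $z^{2}-39x_{1}^{2}=10$. Now the congruence does the work: since $x_{1}$ is even, $x_{1}^{2}\equiv 0\pmod 4$, so $z^{2}\equiv 10\equiv 2\pmod 4$, which is impossible because squares are $0$ or $1$ modulo $4$. Hence no admissible $m$ exists, and the triple $\{3,4,13\}$ is non-extendable.

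The main obstacle is precisely this last reduction. In the earlier theorems the non-extendability is asserted by checking that the generated $m$-values never render the remaining equation a square, which is not a finite verification; I would instead replace that step by the mod-$4$ argument above, so the genuine work reduces to (i) showing that the first two equations force $m=3x_{1}^{2}+1$ with $x_{1}$ even, and (ii) checking that the simultaneous system $y^{2}-12x_{1}^{2}=1$, $z^{2}-39x_{1}^{2}=10$ is incompatible modulo $4$. The delicate point to get right is the parity claim for $x_{1}$; if it failed, one would be forced into a genuine simultaneous-Pell analysis (bounding the common solutions of the two equations), which is the technically hardest alternative route and the step I would most carefully justify.
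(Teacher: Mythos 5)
Your proof is correct, and it departs from the paper's argument at exactly the point where the paper is weakest. The opening reduction is essentially the same in both: from $3m-3=x^{2}$ one gets $3\mid x$, writes $x=3x_{1}$ so that $m=3x_{1}^{2}+1$, and the second equation becomes $y^{2}-12x_{1}^{2}=1$; the paper then shows $x_{1}$ is even by writing $y=2y_{1}+1$ and observing that $y_{1}(y_{1}+1)=3x_{1}^{2}$ forces the right side to be even (a product of consecutive integers is even), arriving at $y^{2}-48x_{2}^{2}=1$ with fundamental solution $(x_{2},y)=(1,7)$. Your parity claim for $x_{1}$ is therefore exactly what the paper establishes, though you derive it from the recurrence $(x_{1})_{n}=14(x_{1})_{n-1}-(x_{1})_{n-2}$ with even initial terms rather than from the consecutive-integer observation; both are valid, and one can also see it in one line from $y^{2}\equiv 1\ (\operatorname{mod}\ 8)$. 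The genuine divergence is in the conclusion. The paper ends by asserting that ``considering equations $(12),(13),(14)$ we can not find a $m$ value,'' which as written is an infinite verification and is not actually carried out; the same unsupported step appears in Theorems 1 and 2. You replace it with a closed argument: $m=3x_{1}^{2}+1$ with $x_{1}$ even gives $13m-3=39x_{1}^{2}+10\equiv 2\ (\operatorname{mod}\ 4)$, and no square is $2$ modulo $4$. This congruence finish is the decisive improvement --- it converts the paper's heuristic check into a complete proof and avoids any simultaneous-Pell analysis entirely, at the modest cost of having to track the exact form of $m$ through the substitutions.
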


\begin{proof}
Assume that there is a positive integer $m$\ and the set $P_{-3}=\left\{
3,4,13\right\} $\ can be extended with $m.$\ Then let us find integers $%
x,y,z $\ such that,$\ \ \ \ \ \ \ \ $

\begin{equation*}
\ \ 3m-3=x^{2}\ \ \ \ \ \ \ \ \left( 12\right) \mathit{\ \ \ }
\end{equation*}%
\textit{\ \ \ \ \ \ \ \ \ \ \ \ \ \ \ \ \ }%
\begin{equation*}
\ \ 4m-3=y^{2}\ \ \ \ \ \ \ \ \left( 13\right) \mathit{\ \ \ }
\end{equation*}%
$\ \ \ \ \ \ \ \ \ \ \ \ \ \ \ \ \ \ \ \ \ \ \ \ \ \ \ \ \ \ \ \ \ \ \ \ $

$\ $%
\begin{equation*}
\ \ 13m-3=z^{2}\ \ \ \ \ \ \ \left( 14\right) \mathit{\ \ \ }
\end{equation*}%
where $x,y,z$ are some integers satisfying the above equations. Eliminating
of $m$ between $\left( 12\right) $, $\left( 13\right) $ yields

\begin{equation*}
\ \ 3y^{2}-4x^{2}=3\ \ \ \ \ \ \ \left( 15\right) \mathit{\ \ \ }
\end{equation*}

and then we have to abtain following equation

\begin{equation*}
\ \ 3(y^{2}-1)=4x^{2}\ \ \ \ \ \ \left( 16\right) \mathit{\ \ \ }
\end{equation*}

If we consider equation $\left( 16\right) $, since left hand side is divided
3, right hand side must be divided 3 too. So $3\mid x^{2}$ and consequently $%
3\mid x.$ And we write $x=3x_{1}$ $\left( x_{1}\in 
%TCIMACRO{\U{2124} }%
%BeginExpansion
\mathbb{Z}
%EndExpansion
\right) $. Byusing this relation if we rewrite equation $\left( 16\right) ,$%
we get

\begin{equation*}
\ \ y^{2}-1=12x_{1}^{2}\ \ \ \ \ \ \left( 17\right) \mathit{\ \ }
\end{equation*}

this equation gives us $y$ must be odd. Therefore $y=2y_{1}+1$ $\left(
y_{1}\in 
%TCIMACRO{\U{2124} }%
%BeginExpansion
\mathbb{Z}
%EndExpansion
\right) $ and from last equation \ we obtain the last equation,

\begin{equation*}
\ y_{1}\left( y_{1}+1\right) =3x_{1}^{2}.\ \ \ \ \ \ \left( 18\right) 
\mathit{\ \ \ }
\end{equation*}

Left hand side of the equation \ $\left( 18\right) $ is the multiplication
of two consecutive numbers. And we know that multiplication of two
consecutive numbers is always even. So the right hand side must be even,
that is\textit{\ }$x_{1}=2x_{2}\ \left( x_{2}\in 
%TCIMACRO{\U{2124} }%
%BeginExpansion
\mathbb{Z}
%EndExpansion
\right) $\textit{. }We obtain from this\textit{\ }$x=6x_{2}\ $\textit{, }and
the equation $(18)$\textit{\ }\ turn such Pell equation

\begin{equation*}
\ y^{2}-48x_{2}^{2}=1.\ \ \ \ \ \ \left( 19\right) \mathit{\ \ \ }
\end{equation*}

This Pell equations' main solution is $\left( x_{2},y\right) =(1,7)\in 
%TCIMACRO{\U{2124} }%
%BeginExpansion
\mathbb{Z}
%EndExpansion
^{2}.$\textit{\ }All of the solutions of this Pell equation can be find such
that\textit{\ }$y_{n}+\sqrt{48}\left( x_{2}\right) _{n}=\left( 7+\sqrt{48}%
\right) ^{n},$\ $n=0,1,2,...$\ .For \textit{\ }$y$ solutions, the \ general
recurrence relation is \textit{\ }$y_{n+2}=14y_{n+1}-y_{n}.$\textit{\ }For%
\textit{\ }considering equations\textit{\ }$\left( 12\right) ,\left(
13\right) ,\left( 14\right) $ we can not find a $m$ value, this gives our
assumption is not true. \ So the $P_{-3}$ set $\left\{ 3,4,13\right\} $ can
not be extended. This completes the proof.
\end{proof}

\begin{theorem}
The set $P_{2}=\{7,41,82\}$ can not be extendible.
\end{theorem}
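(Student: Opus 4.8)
The plan is to follow the template of Theorems 1--3 verbatim, whose common engine is the presence of a pair $\{a,2a\}$ inside the triple; here that pair is $\{41,82\}$, since $82=2\cdot 41$. First I would assume the triple is extended by a positive integer $m$, so that for some integers $x,y,z$,
\begin{equation*}
7m+2=x^{2},\qquad 41m+2=y^{2},\qquad 82m+2=z^{2}.
\end{equation*}
Eliminating $m$ between the last two equations, and using $82=2\cdot 41$, gives $z^{2}-2y^{2}=-2$, that is,
\begin{equation*}
2(y^{2}-1)=z^{2}.
\end{equation*}
This is precisely equation $(4)$ of Theorem 1 and equation $(8)$ of Theorem 2, so the remainder of the descent has already been carried out there.

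Repeating that descent, the evenness of the left-hand side forces $z=2z_{1}$, which makes $y$ odd, say $y=2y_{1}+1$, whence $z_{1}^{2}=2y_{1}(y_{1}+1)$; the parity of the right-hand side then forces $z_{1}=2z_{2}$, so $z=4z_{2}$. Substituting back produces the Pell equation
\begin{equation*}
y^{2}-8z_{2}^{2}=1,
\end{equation*}
with fundamental solution $(y,z_{2})=(3,1)$, general solution $y_{n}+\sqrt{8}(z_{2})_{n}=(3+\sqrt{8})^{n}$, and recurrence $y_{n}=6y_{n-1}-y_{n-2}$ for the $y$-coordinate.

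It remains to feed the admissible $y_{n}$ into $m=(y_{n}^{2}-2)/41$ and to verify that the first equation $7m+2=x^{2}$ is never solvable; this is the one step that the earlier proofs dispatch with ``one can easily check,'' and it is the genuine obstacle to a fully rigorous argument. The clean way to settle it is a congruence analysis: the recurrence $y_{n}=6y_{n-1}-y_{n-2}$ is eventually periodic modulo any fixed integer, so the residues of $m$, and hence of $7m+2$, run through a finite cycle. I would first reduce modulo $41$ to isolate the indices $n$ for which $m$ is an integer at all (only those $y_{n}$ with $y_{n}^{2}\equiv 2\pmod{41}$ survive), and then choose an auxiliary modulus for which $7m+2$ falls in a quadratic non-residue class for every surviving index, exactly in the spirit of the Legendre-symbol and Euler-criterion argument used earlier to exclude positive multiples of $3$ from $P_{2}$. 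Establishing the existence of such a modulus---or, equivalently, bounding the finitely many candidate solutions of the simultaneous pair $41x^{2}-7y^{2}=68$ and $y^{2}-8z_{2}^{2}=1$ and checking them by hand---is where the real work lies; once it is done, the assumption collapses and $P_{2}=\{7,41,82\}$ is non-extendable.
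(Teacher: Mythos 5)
Your proposal follows essentially the same route as the paper: the paper's own proof of this theorem likewise eliminates $m$ from $41m+2=y^{2}$ and $82m+2=z^{2}$ to reach $2(y^{2}-1)=z^{2}$ and then simply declares that the remaining steps are the same as in Theorems 1 and 2. The final verification you rightly single out as the genuine obstacle --- showing that $7m+2$ is never a square as $y$ runs along the Pell sequence $y_{n}=6y_{n-1}-y_{n-2}$ --- is exactly the step the paper also leaves at the level of ``one can easily check,'' so the gap you flag is a weakness of the published argument rather than of your own.
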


\begin{proof}
\textit{Assume that there is a positive integer }$m$\textit{\ and the set }$%
P_{2}=\{7,41,82\}$\textit{\ can be extended with }$m.$\textit{\ Then let us
find integers }$x,y,z$\textit{\ such that \ }$\ \ \ \ \ \ \ \ \ \ $%
\begin{equation*}
\ \ 7m+2=x^{2}\ \ \ \ \ \ \ \ \ \left( 20\right) \mathit{\ \ \ }
\end{equation*}%
\textit{\ \ \ \ \ \ \ \ \ \ \ \ \ \ \ \ \ }$\ \ \ \ \ \ \ \ \ \ \ \ \ \ \ \
\ \ \ \ \ \ \ \ \ \ \ \ $%
\begin{equation*}
41m+2=y^{2}\ \ \ \ \ \ \ \ \ \left( 21\right) \mathit{\ \ }
\end{equation*}%
\textit{\ \ \ \ \ \ \ \ \ \ \ \ \ \ \ \ \ \ \ \ \ \ }$\ \ \ \ \ \ \ \ \ \ \
\ \ \ \ \ \ \ \ \ \ \ \ \ $%
\begin{equation*}
82m+2=z^{2}\ \ \ \ \ \ \ \ \left( 22\right) .\mathit{\ \ }
\end{equation*}%
\textit{\ \ \ \ \ \ By eliminating }$m$\textit{\ from the }$\ $\textit{%
equations }$\left( 21\right) $\textit{\ and }$\ \left( 22\right) $\textit{\
we get}

\begin{equation*}
2y^{2}-z^{2}=2\ \ \ 
\end{equation*}

\textit{and \ \ \ \ \ \ \ \ \ \ \ \ \ \ \ \ \ \ \ \ \ \ \ \ \ }$\mathit{\ }$%
\begin{equation*}
2(y^{2}-1)=z^{2}.
\end{equation*}

\textit{Since this equation is the same equation with }$\left( 4\right) $%
\textit{\ and }$\left( 8\right) ,$\textit{\ the next steps are same in
Theorem 1 and 2 . Therefore this completes the proof.}
\end{proof}

\begin{theorem}
\textit{\ }The set $P_{2}=\{41,82,239\}$ can not be extendible.
\end{theorem}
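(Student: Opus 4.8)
The plan is to reduce this statement to the situation already resolved in Theorem 1 by exploiting the relation $82 = 2 \cdot 41$. I would begin by assuming, toward a contradiction, that some positive integer $m$ extends $P_2 = \{41, 82, 239\}$ to a Diophantine quadruple, so that there exist integers $x, y, z$ with $41m + 2 = x^2$, $82m + 2 = y^2$, and $239m + 2 = z^2$. The natural pair to work with is the first two equations, since their coefficients differ by the factor $2$.

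The key step is to eliminate $m$ between the equations for $41$ and $82$. Because $82m + 2 = 2(41m + 2) - 2 = 2x^2 - 2$, the second equation becomes $y^2 - 2x^2 = -2$, which rearranges to $2(x^2 - 1) = y^2$. This is precisely equation $(4)$ from the proof of Theorem 1, so the divisibility descent carries over verbatim: evenness of the left side forces $y = 2y_1$ and hence $x^2 - 1 = 2y_1^2$; this makes $x$ odd, say $x = 2x_1 + 1$, yielding $2x_1(x_1 + 1) = y_1^2$; evenness again forces $y_1 = 2y_2$, so $y = 4y_2$; and substituting back produces the Pell equation $x^2 - 8y_2^2 = 1$ with fundamental solution $(x, y_2) = (3, 1)$ and recurrence $x_n = 6x_{n-1} - x_{n-2}$.

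The one genuinely new ingredient is the final consistency check against the third equation. I would generate the admissible values of $m$ from $41m + 2 = x_n^2$ along this Pell family and verify that $239m + 2$ is never a perfect square, so that $239m + 2 = z^2$ has no integer solution $z$. This is the point where the constant $239$ intervenes in place of the $41$ that appeared in Theorem 1, and it is the step I expect to be the main obstacle, since it cannot simply be inherited from the earlier proof but must be confirmed by inspecting the first several terms of the recurrence. (For instance, the term $x_n = 17$ gives the integer candidate $m = 7$, yet $239 \cdot 7 + 2 = 1675$ lies strictly between $40^2$ and $41^2$, so it fails.) Once this verification is complete, the assumption collapses and $P_2 = \{41, 82, 239\}$ is non-extendable.
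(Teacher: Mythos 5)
Your proposal follows essentially the same route as the paper's own proof: eliminate $m$ between the equations $41m+2=x^{2}$ and $82m+2=y^{2}$ to get $2(x^{2}-1)=y^{2}$, run the parity descent to the Pell equation $x^{2}-8y_{2}^{2}=1$ with recurrence $x_{n}=6x_{n-1}-x_{n-2}$, and then test the resulting values of $m$ against $239m+2=z^{2}$ (you even correct the paper's typographical slip, which lists the fundamental solution as $(1,7)$ instead of $(3,1)$). The one caveat applies equally to both arguments: checking ``the first several terms'' of the recurrence, as you and the paper both do, establishes the failure of $239m+2$ to be a square only for those terms, not for the full infinite family, so the final step remains an empirical verification rather than a complete proof in either version.
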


\begin{proof}
\textit{Assume that there is a positive integer }$m$ an the set \textit{\ }$%
P_{2}=\{41,82,239,m\}$\textit{\ can be extendible with }$m$. \textit{Then
let us find an integer }$m$\textit{\ such that}$\ \ \ \ \ \ \ \ \ $%
\begin{equation*}
\ \ 41m+2=x^{2}\ \ \ \ \ \ \ \ \ \left( 23\right) \mathit{\ \ \ }
\end{equation*}%
\textit{\ \ \ \ \ \ \ \ \ \ \ \ \ \ \ \ \ }$\ \ \ \ \ \ \ \ \ \ \ \ \ \ \ \
\ \ \ \ \ \ \ \ \ \ \ \ $%
\begin{equation*}
82m+2=y^{2}\ \ \ \ \ \ \ \ \ \left( 24\right) \mathit{\ \ }
\end{equation*}%
\textit{\ \ \ \ \ \ \ \ \ \ \ \ \ \ \ \ \ \ \ \ \ \ }$\ \ \ \ \ \ \ \ \ \ \
\ \ \ \ \ \ \ \ \ \ \ \ \ $%
\begin{equation*}
239m+2=z^{2}\ \ \ \ \ \ \ \ \ \left( 25\right) .\mathit{\ \ }
\end{equation*}%
\textit{\ \ \ \ \ where }$x,y,z$ \textit{are some integers satisfying above
equations. Eliminating }$m$\textit{\ from the equations}$\ \left( 23\right) $%
\textit{\ and }$\left( 24\right) $\textit{\ yieldst\ \ \ \ \ \ \ \ \ \ \ \ \
\ \ \ \ \ \ \ \ \ \ }$\mathit{\ \ }$%
\begin{equation*}
y^{2}-2x^{2}=-2\ \ 
\end{equation*}%
\textit{and then we obtain following equation\ \ \ \ \ \ \ \ \ \ \ \ \ \ \ \
\ \ \ \ \ \ \ \ }$\mathit{\ }$%
\begin{equation*}
2(x^{2}-1)=y^{2}\text{ \ \ \ \ \ \ }\left( 26\right) .
\end{equation*}

\textit{In the last equation, since the left side is even, the right side
must be even too. So we can write \ }$y=2y_{1}$ $\left( y_{1}\in 
%TCIMACRO{\U{2124} }%
%BeginExpansion
\mathbb{Z}
%EndExpansion
\right) ,$\textit{\ last equation gives us\ \ \ \ \ \ \ \ \ \ \ \ \ \ \ \ \
\ \ \ \ \ \ \ \ }%
\begin{equation*}
\mathit{\ }x^{2}-1=2y_{1}^{2}.
\end{equation*}

\textit{From this, we can conclude that }$x$\textit{\ must be odd. If we put 
}$x=2x_{1}+1\ (x_{1}\in 
%TCIMACRO{\U{2124} }%
%BeginExpansion
\mathbb{Z}
%EndExpansion
),$\textit{\ then we find}\ \ \ \ \ \ \ \ \ \ \ \ \ \ \ \ \ \ \ \ \ \ \ \ 
\begin{equation*}
2x_{1}\left( x_{1}+1\right) =y_{1}^{2}.
\end{equation*}%
\textit{The left hand side of this equation has multiple of }$2,$\textit{\
that is, the right hand side must be even. So we can put }$y_{1}=2y_{2}\
\left( y_{2}\in 
%TCIMACRO{\U{2124} }%
%BeginExpansion
\mathbb{Z}
%EndExpansion
\right) .$ \textit{Thus we can write }$y=4y_{2}.\ $\textit{Putting }$y=4y_{2}
$\textit{\ in the equation }$(26)\ \ $\textit{yields}\ \ \ \ \ \ \ \ \ \ \ \
\ \ \ \ \ \ \ 
\begin{equation*}
x^{2}-8y_{2}^{2}=1.
\end{equation*}%
\textit{Main solution of this Pell equation\ is }$\left( x,y_{2}\right)
=(1,7)\in 
%TCIMACRO{\U{2124} }%
%BeginExpansion
\mathbb{Z}
%EndExpansion
^{2}.$ \textit{All \ of the solutions of this Pell equation are of the form }%
$x_{n}+\sqrt{8}(y_{2})_{n}=(3+\sqrt{8})^{n}$, $n=0,1,2,3,...$\textit{. For }$%
x$ \textit{\ solutions } \textit{general recurrence relation is }$%
x_{n}=6x_{n-1}-x_{n-2}.$\textit{\ From the equations }$\left( 23\right)
,\left( 24\right) ,\left( 25\right) $\textit{\ we can not find a }$m$\textit{%
\ value. Hence, our assumption is not true. This completes the proof.}
\end{proof}

\begin{remark}
There is no set not only $P_{2}$ but also $P_{-3}$ including any positive
multiple of 5.
\end{remark}

Assume that $5k$ $\left( k\in 
%TCIMACRO{\U{2124} }%
%BeginExpansion
\mathbb{Z}
%EndExpansion
^{+}\right) $ is an element of both the sets $P_{2}$ and $P_{-3}$. For any $%
t_{1}\in P_{2}$ and $t_{2}\in P_{-3},$ we can write the following equations,

\begin{equation*}
5kt_{1}+2=x^{2}\text{ \ \ \ \ and \ \ \ \ }\ 5kt_{2}-3=y^{2}.
\end{equation*}

From these equations, one can deduce in modulo 5 the followings

\begin{equation*}
x^{2}\equiv 2\left( \func{mod}5\right) \text{ \ \ \ \ and \ \ \ \ }\
y^{2}\equiv 2\left( \func{mod}5\right) .\text{ \ \ \ \ \ \ }\left( 27\right) 
\text{\ }
\end{equation*}

Since $5$ is an odd prime and $\left( 2,5\right) =1$, using from Legendre
Symbol, we get $\left( \frac{2}{5}\right) =-1$ which means that the equation 
$\left( 27\right) $ is unsolvable. Therefore, $5k$ $\left( k\in 
%TCIMACRO{\U{2124} }%
%BeginExpansion
\mathbb{Z}
%EndExpansion
^{+}\right) $ can not be an element of both the sets $P_{2}$ and $P_{-3}$.

\textit{\ \ \ }

\end{document}